\theoremstyle{plain}
\newtheorem{theorem}{Theorem}[section]
\newtheorem{lemma}[theorem]{Lemma}
\newtheorem{corollary}[theorem]{Corollary}
\theoremstyle{definition}
\newtheorem{definition}[theorem]{Definition}
\theoremstyle{plain}
\theoremstyle{definition}
\newcommand{\reduces}{\ensuremath{\to^*}}
\newcommand{\Rmnum}[1]{\expandafter\@slowromancap\romannumeral #1@}
\newcommand{\CL}{\mathrm{CL}}
\newcommand{\FV}{\mathrm{FV}}
\newcommand{\fab}{\mathrm{(fab)}}
\newcommand{\abf}{\mathrm{(abf)}}
\newcommand{\abfp}{\mathrm{(abf')}}
\newcommand{\abcfp}{\mathrm{(abcf')}}
\newcommand{\abcdef}{\mathrm{(abcdef)}}
\newcommand{\Ks}{\mathsf{K}}
\newcommand{\Ss}{\mathsf{S}}
\newcommand{\Is}{\mathsf{I}}
\newcommand{\Bs}{\mathsf{B}}
\newcommand{\Cs}{\mathsf{C}}
\newcommand{\abs}[3]{\lbrack #2 \rbrack_{#1} . #3}
\newcommand{\trans}[1]{\mathrm{H}_{#1}}
\newcommand{\Opt}[1]{\mathrm{Opt}\llbracket#1\rrbracket}
\title{On the equivalence of different presentations of Turner's
  bracket abstraction algorithm}
\author{{\L}ukasz Czajka}
\date{13 Oct 2015}
\setlist{itemsep=0pt,topsep=\parsep}
\begin{document}
\maketitle

\begin{abstract}
  Turner's bracket abstraction algorithm is perhaps the most
  well-known improvement on simple bracket abstraction algorithms. It
  is also one of the most studied bracket abstraction algorithms. The
  definition of the algorithm in Turner's original paper is slightly
  ambiguous and it has been subject to different interpretations. It
  has been erroneously claimed in some papers that certain
  formulations of Turner's algorithm are equivalent. In this note we
  clarify the relationship between various presentations of Turner's
  algorithm and we show that some of them are in fact equivalent for
  translating lambda-terms in beta-normal form.
\end{abstract}

\section{Introduction}\label{sec_intro}

Bracket abstraction is a way of converting lambda-terms into a
first-order combinatory representation. This has applications to the
implementation of functional programming
languages~\cite{Turner1979,Jones1987,JoyRaywardBurton1985} or to the
automation in proof assistants~\cite{MengPaulson2008,Hurd2002}. The
well-known simple bracket abstraction algorithms of Curry and
Sch{\"o}nfinkel have the disadvantage of producing big combinatory
terms in the worst case. Perhaps the most well-known improvement on
these algorithms is the bracket abstraction algorithm of
Turner~\cite{Turner1979a}. It is also one of the most studied bracket
abstraction algorithms, with analysis of its worst-case and
average-case performance available in the literature.

The original definition of Turner's algorithm in~\cite{Turner1979} is
slightly ambiguous and it has been subject to many interpretations. It
seems to be a relatively prevalent misconception that certain
presentations of Turner's algorithm based on ``optimisation rules''
and others based on recursive equations with side conditions are
equivalent. This is explicitly (and erroneously) claimed
e.g.~in~\cite{JoyRaywardBurton1985}, and seems to be an implicit
assumption in many other papers.

The non-equivalence between a presentation of Turner's algorithm based
on ``optimisation rules'' and a presentation based on ``side
conditions'' was noted in~\cite{Bunder1990}. In this paper we clarify
the relationship between various formulations of Turner's
algorithm. In particular, we show the following.
\begin{itemize}
\item A certain formulation of Turner's algorithm based on
  ``optimisation rules'' is equivalent \emph{for translating
    lambda-terms in $\beta$-normal form} to a formulation based on
  equations with side conditions, provided one chooses the equations
  and the optimisations carefully. The formulations are not equivalent
  for translating lambda-terms with $\beta$-redexes.
\item If one removes ``$\eta$-rules'' then certain presentations
  become equivalent in general.
\item Analogous results hold for certain presentations of the
  Sch{\"o}nfinkel's bracket abstraction algorithm.
\end{itemize}

\section{Bracket abstraction}\label{sec_abstr}

In this section we give a general definition of a bracket abstraction
algorithm and present some of the simplest such algorithms. We assume
basic familiarity with the lambda-calculus~\cite{Barendregt1984}. We
consider lambda-terms up to $\alpha$-equivalence and we use the
variable convention.

First, we fix some notation and terminology. By~$\Lambda$ we denote
the set of all lambda-terms, by~$\Lambda_0$ the set of all closed
lambda-terms, and by~$V$ the set of variables. Given $B \subseteq
\Lambda_0$ by~$\CL(B)$ we denote the set of all lambda-terms built
from variables and elements of~$B$ using only application. A
\emph{(univariate) bracket (or combinatory) abstraction algorithm}~$A$
for a basis~$B \subseteq \Lambda_0$ is an algorithm computing a
function\footnote{Whenever convenient we confuse algorithms with the
  functions they compute.}  $A : V \times \CL(B) \to \CL(B)$ such that
for any $x \in V$ and $t \in \CL(B)$ we have $\FV(A(x,t)) = \FV(t)
\setminus \{x\}$ and $A(x,t) =_{\beta\eta} \lambda x . t$. We usually
write $\abs{A}{x}{t}$ instead of~$A(x,t)$. For an algorithm~$A$ the
\emph{induced translation} $\trans{A} : \Lambda \to \CL(B)$ is defined
recursively by
\[
\begin{array}{rcll}
  \trans{A}(x) &=& x \\
  \trans{A}(s t) &=& (\trans{A}(s)) (\trans{A}(t)) \\
  \trans{A}(\lambda x . t) &=& \abs{A}{x}{\trans{A}(t)}
\end{array}
\]
It follows by straightforward induction that $\FV(\trans{A}(t)) =
\FV(t)$ and $\trans{A}(t) =_{\beta\eta} t$.

Abstraction algorithms are usually presented by a list of recursive
equations with side conditions. It is to be understood that the first
applicable equation in the list is to be used.

For instance, the algorithm~$\fab$ of
Curry~\cite[\textsection6A]{CurryFeys1958} for the basis
$\{\Ss,\Ks,\Is\}$ where
\[
\begin{array}{rcl}
  \Ss &=& \lambda x y z . x z (y z) \\
  \Ks &=& \lambda x y . x \\
  \Is &=& \lambda x . x
\end{array}
\]
may be defined by the equations
\[
\begin{array}{rcl}
  \abs{\fab}{x}{s t} &=& \Ss (\abs{\fab}{x}{s}) (\abs{\fab}{x}{t}) \\
  \abs{\fab}{x}{x} &=& \Is \\
  \abs{\fab}{x}{t} &=& \Ks t
\end{array}
\]
The last equation is thus used only when the previous two cannot be
applied. For example $\abs{\fab}{x}{y y x} = \Ss (\Ss (\Ks y) (\Ks y))
\Is$. Note that we have $\abs{\fab}{x}{\Ss} = \Ss$,
$\abs{\fab}{x}{\Ks} = \Ks$ and $\abs{\fab}{x}{\Is} = \Is$, but
$\abs{\fab}{x}{\lambda x . t}$ is undefined if $\lambda x . t \notin
\{\Ss,\Ks,\Is\}$, because then $\lambda x . t \notin
\CL(\{\Ss,\Ks,\Is\})$. One easily shows by induction on the structure
of~$t \in \CL(\{\Ss,\Ks,\Is\})$ that indeed $\FV(\abs{\fab}{x}{t}) =
\FV(t) \setminus \{x\}$ and $\abs{\fab}{x}{t} \reduces_{\beta} t$,
so~$\fab$ is an abstraction algorithm. For all algorithms which we
present, their correctness, i.e., that they are abstraction
algorithms, follows by straightforward induction, and thus we will
avoid mentioning this explicitly every time.

The algorithm~$\abfp$ is defined by the equations for~$\fab$ plus the
optimisation rule
\[
\Ss (\Ks s) (\Ks t) \to \Ks (s t).
\]
More precisely the algorithm~$\abfp$ is defined by
\[
\begin{array}{rcl}
  \abs{\abfp}{x}{s t} &=& \Opt{\Ss (\abs{\abfp}{x}{s}) (\abs{\abfp}{x}{t})} \\
  \abs{\abfp}{x}{x} &=& \Is \\
  \abs{\abfp}{x}{t} &=& \Ks t
\end{array}
\]
where the function~$\Opt{}$ is defined by
\[
\begin{array}{rcl}
  \Opt{\Ss (\Ks s) (\Ks t)} &=& \Ks (s t) \\
  \Opt{\Ss s t} &=& \Ss s t
\end{array}
\]
Of course, it is to be understood that an earlier equation takes
precedence when more than one equation applies. This conforms to the
interpretation of ``optimisation rules''
in~\cite[Chapter~16]{Jones1987}, but e.g.~Bunder~\cite{Bunder1990}
interprets them as rewrite rules. For example we have
$\abs{\abfp}{x}{y y x} = \Ss (\Ks (y y)) \Is$. It is a recurring
pattern that certain bracket abstraction algorithms are defined by the
equations for~$\abfp$, differing only in the definition of the
function~$\Opt{}$. In such a case we will not repeat the equations
of~$\abfp$, and only note that an algorithm is \emph{defined by the
  optimisations} given by a function~$\Opt{}$.

The algorithm~$\abcfp$ is defined by the following optimisations.
\[
\begin{array}{rcl}
  \Opt{\Ss (\Ks s) (\Ks t)} &=& \Ks (s t) \\
  \Opt{\Ss (\Ks s) \Is} &=& s \\
  \Opt{\Ss s t} &=& \Ss s t
\end{array}
\]
Note that e.g.~$\abs{\abcfp}{x}{\Ss (\Ks y) (\Ks y) x} = \Ss (\Ks y)
(\Ks y)$.

It follows by induction on~$t$ that $\abs{\abfp}{x}{t} = \Ks t$ if $x
\notin \FV(t)$. Note that this would not be true if optimisations
could be applied as rewrite rules: below the root and recursively to
results of optimisations (consider e.g.~abstracting~$x$ from $\Ss (\Ks
a) (\Ks a)$). This seems to disprove\footnote{Like with many
  presentations of abstraction algorithms based on optimisation
  rewrite rules, it is not completely clear what the precise algorithm
  actually is, but judging by some examples given in~\cite{Bunder1990}
  recursive optimisations below the root of optimisation results are
  allowed.} claim~$(5)$ in~\cite{Bunder1990}.

Using the above fact one easily shows that~$\abfp$ is equivalent
(i.e.~gives identical results) to the following algorithm~$\abf$.
\[
\begin{array}{rcll}
  \abs{\abf}{x}{x} &=& \Is & \\
  \abs{\abf}{x}{t} &=& \Ks t & \text{if } x \notin \FV(t) \\
  \abs{\abf}{x}{s t} &=& \Ss (\abs{\abf}{x}{s}) (\abs{\abf}{x}{t}) &
\end{array}
\]

The algorithm~$\abf$ is perhaps the most widely known and also one of
the simplest bracket abstraction algorithms, but it is not
particularly efficient. A natural measure of the efficiency of an
abstraction algorithm~$A$ is the \emph{translation size} -- the size
of~$\trans{A}(t)$ as a function of the size of~$t$. For~$\fab$ the
translation size may be exponential, while for~$\abf$ it
is~$O(n^3)$. See~\cite{JoyRaywardBurton1985,Joy1984} for an analysis
of the translation size for various bracket abstraction
algorithms. For a fixed finite basis~$\Omega(n\log n)$ is a lower
bound on the translation
size~\cite{JoyRaywardBurton1985,Joy1984}. This bound is attained
in~\cite{KennawaySleep1987} (see also~\cite{Burton1982}
and~\cite[Section~4]{JoyRaywardBurton1985}).

Sch{\"o}nfinkel's bracket abstraction algorithm~$S$ is defined for the
basis $\{\Ss,\Ks,\Is,\Bs,\Cs\}$ where:
\[
\begin{array}{rcl}
  \Bs &=& \lambda x y z . x (y z) \\
  \Cs &=& \lambda x y z . x z y
\end{array}
\]
The algorithm~$S$ is defined by the following equations.
\[
\begin{array}{lrcll}
  (1)&\abs{S}{x}{t} &=& \Ks t & \text{if } x \notin \FV(t) \\
  (2)&\abs{S}{x}{x} &=& \Is & \\
  (3)&\abs{S}{x}{s x} &=& s & \text{if } x \notin
  \FV(s) \\
  (4)&\abs{S}{x}{s t} &=& \Bs s (\abs{S}{x}{t}) & \text{if } x \notin
  \FV(s) \\
  (5)&\abs{S}{x}{s t} &=& \Cs (\abs{S}{x}{s}) t & \text{if } x \notin
  \FV(t) \\
  (6)&\abs{S}{x}{s t} &=& \Ss (\abs{S}{x}{s}) (\abs{S}{x}{t}) &
\end{array}
\]
This algorithm is called~$\abcdef$
in~\cite[\textsection6A]{CurryFeys1958} and it is actually the
Sch{\"o}nfinkel's algorithm implicit in~\cite{Schonfinkel1924}. Like
for~$\abf$, the translation size for~$S$ is also~$O(n^3)$ but with a
smaller constant~\cite{JoyRaywardBurton1985}. A variant~$S'$ of
Sch{\"o}nfinkel's algorithm is defined by the optimisations:
\[
\begin{array}{lrcl}
  (1)&\Opt{\Ss (\Ks s) (\Ks t)} &=& \Ks (s t) \\
  (2)&\Opt{\Ss (\Ks s) \Is} &=& s \\
  (3)&\Opt{\Ss (\Ks s) t} &=& \Bs s t \\
  (4)&\Opt{\Ss s (\Ks t)} &=& \Cs s t \\
  (5)&\Opt{\Ss s t} &=& \Ss s t
\end{array}
\]
The algorithm~$S'$ seems to have been introduced by Turner
in~\cite{Turner1979a} where he calls it ``an improved algorithm of
Curry'', but this algorithm is not equivalent to any of Curry's
algorithms~\cite{Bunder1990}. In fact, it is a common misconception
(claimed e.g.~in~\cite{JoyRaywardBurton1985}) that the translations
induced by the algorithms~$S$ and~$S'$ are equivalent. As a
counterexample consider~$\lambda y . (\lambda z . x) y y$. We have
\[
\trans{S}(\lambda y . (\lambda z . x) y y) =
\abs{S}{y}{\Ks x y y} = \Ss (\abs{S}{y}{\Ks x y}) \Is =
\Ss (\Ks x) \Is
\]
but
\[
\trans{S'}(\lambda y . (\lambda z . x) y y) = \abs{S'}{y}{\Ks x y y} = x
\]
because
\[
\Opt{\Ss (\abs{S'}{y}{\Ks x y}) \Is} = \Opt{\Ss (\Ks x) \Is} = x.
\]
The difference is that the algorithm~$S'$ may effectively contract some
$\beta$-redexes already present in the input term. That the algorithms
themselves are not equivalent has already been observed by Bunder
in~\cite{Bunder1990} with the following counterexample: $\Ks \Ss x
(\Ks \Ss x)$. We have $\abs{S}{x}{\Ks \Ss x (\Ks \Ss x)} = \Ss
(\Ks \Ss) (\Ks \Ss)$ but $\abs{S'}{x}{\Ks \Ss x (\Ks \Ss x)} = \Ks (\Ss
\Ss)$. The term of Bunder's counterexample is indeed also a
counterexample for the equivalence of the induced translations, which
is not completely immediate, but it is easy to show using the
following identities (which do not hold e.g.~for~$\fab$):
\[
\begin{array}{l}
  \trans{S}(\Ks) = \trans{S'}(\Ks) = \Ks \\
  \trans{S}(\Ss) = \trans{S'}(\Ss) = \Ss
\end{array}
\]
As another counterexample consider the term $\lambda y . z ((\lambda x
. x) y)$. We have $\trans{S'}(\lambda y . z ((\lambda x . x) y)) = z$
but $\trans{S}(\lambda y . z ((\lambda x . x) y)) = \Bs z \Is$. This
shows that it may be impossible to rewrite~$\trans{S}(t)$
to~$\trans{S'}(t)$ using the optimisations of the algorithm~$S'$ as
rewrite rules.

\section{Turner's algorithm}\label{sec_turner}

Turner's algorithm~\cite{Turner1979a} is perhaps the most widely known
improvement on Sch{\"o}nfinkel's algorithm. The basis for Turner's
algorithm is $\{\Ss,\Ks,\Is,\Bs,\Cs,\Ss',\Bs',\Cs'\}$ where
\[
\begin{array}{rcl}
  \Ss' &=& \lambda k x y z . k (x z) (y z) \\
  \Bs' &=& \lambda k x y z . k x (y z) \\
  \Cs' &=& \lambda k x y z . k (x z) y
\end{array}
\]
Turner's algorithm~$T$ is defined by the following equations.
\[
\begin{array}{lrcll}
  (1)&\abs{T}{x}{t} &=& \Ks t & \text{if } x \notin \FV(t) \\
  (2)&\abs{T}{x}{x} &=& \Is & \\
  (3)&\abs{T}{x}{s x} &=& s & \text{if } x \notin
  \FV(s) \\
  (4)&\abs{T}{x}{u x t} &=& \Cs u t & \text{if } x \notin \FV(ut) \\
  (5)&\abs{T}{x}{u x t} &=& \Ss u (\abs{T}{x}{t}) & \text{if } x \notin \FV(u) \\
  (6)&\abs{T}{x}{u s t} &=& \Bs' u s (\abs{T}{x}{t}) & \text{if } x \notin
  \FV(u s) \\
  (7)&\abs{T}{x}{u s t} &=& \Cs' u (\abs{T}{x}{s}) t & \text{if } x \notin
  \FV(u t) \\
  (8)&\abs{T}{x}{u s t} &=& \Ss' u (\abs{T}{x}{s}) (\abs{T}{x}{t}) &
  \text{if } x \notin \FV(u) \\
  (9)&\abs{T}{x}{s t} &=& \Bs s (\abs{T}{x}{t}) & \text{if } x \notin
  \FV(s) \\
  (10)&\abs{T}{x}{s t} &=& \Cs (\abs{T}{x}{s}) t & \text{if } x \notin
  \FV(t) \\
  (11)&\abs{T}{x}{s t} &=& \Ss (\abs{T}{x}{s}) (\abs{T}{x}{t}) &
\end{array}
\]
The translation size of~$T$ is
worst-case~$O(n^2)$~\cite{JoyRaywardBurton1985} and
average-case~$O(n^{3/2})$~\cite{Hikita1984}.

The idea with Turner's combinators~$\Ss',\Bs',\Cs'$ is that they allow
to leave the structure of the abstract of an application $s t$
unaltered in the form $\kappa s' t'$ where~$\kappa$ is a ``tag''
composed entirely of combinators. For instance, if $x_1,x_2,x_3 \in
\FV(s) \cap \FV(t)$ (and~$s$ has a form such that the equations 3-5 in
the definition of~$T$ are not used) then
\[
\abs{T}{x_1,x_2,x_3}{s t} = \Ss' (\Ss' \Ss) (\abs{T}{x_1,x_2,x_3}{s})
(\abs{T}{x_1,x_2,x_3}{s})
\]
while
\[
\abs{S}{x_1,x_2,x_3}{s t} = \Ss (\Bs \Ss (\Bs (\Bs \Ss)
(\abs{S}{x_1,x_2,x_3}{s}))) (\abs{S}{x_1,x_2,x_3}{s}).
\]

In~\cite{Turner1979a} Turner formulates his algorithm in terms of
``optimisation rules''. There is some ambiguity in the original
definition and it has been subject to different
interpretations. Whatever the interpretation, the original formulation
is not equivalent to the algorithm~$T$ as defined above. Perhaps the
most common interpretation is like in~\cite[Chapter~16]{Jones1987}. We
thus define the algorithm~$T'$ by the following optimisations (see
Section~\ref{sec_abstr}).
\[
\begin{array}{lrcl}
  (1)&\Opt{\Ss (\Ks s) (\Ks t)} &=& \Ks (s t) \\
  (2)&\Opt{\Ss (\Ks s) \Is} &=& s \\
  (3)&\Opt{\Ss (\Ks (u s)) t} &=& \Bs' u s t \\
  (4)&\Opt{\Ss (\Ks s) t} &=& \Bs s t \\
  (5)&\Opt{\Ss (\Bs u s) (\Ks t)} &=& \Cs' u s t \\
  (6)&\Opt{\Ss (\Bs' u_1 u_2 s) (\Ks t)} &=& \Cs' (u_1 u_2) s t \\
  (7)&\Opt{\Ss s (\Ks t)} &=& \Cs s t \\
  (8)&\Opt{\Ss (\Bs u s) t} &=& \Ss' u s t \\
  (9)&\Opt{\Ss (\Bs' u_1 u_2 s) t} &=& \Ss' (u_1 u_2) s t \\
  (10)&\Opt{\Ss s t} &=& \Ss s t
\end{array}
\]
It is easily seen that the translations induced by~$T$ and~$T'$ are
not equivalent by reusing the counterexamples for~$S$ and~$S'$ from
the previous section. We will later show that the translations induced
by~$T$ and~$T'$ are equivalent for terms in $\beta$-normal form.

That~$T$ and~$T'$ are not equivalent is because they may effectively
perform $\eta$-contractions. If we disallow this, then the algorithms
become equivalent. Let~$T_{-\eta}$ be~$T$ without the equations~$(3)$,
$(4)$ and~$(5)$, and let~$T_{-\eta}'$ be~$T'$ without the
optimisation~$(2)$. We shall show later that~$T_{-\eta}$
and~$T_{-\eta}'$ are equivalent.

Another popular formulation of Turner's algorithm is~$T''$ which is
like~$T$ except that~$u$ is required to be closed. This formulation is
used e.g.~in~\cite{Hikita1984,Kennaway1982}. The translations induced
by the algorithms~$T'$ and~$T''$ are not equivalent even for terms in
$\beta$-normal form. For instance, we have
\[
\trans{T''}(\lambda x y z . y (x z) x) = \abs{T''}{x, y}{\Cs (\Bs y x)
  x} = \abs{T''}{x}{\Cs' \Cs (\Cs \Bs x) x} = \Ss' (\Cs' \Cs) (\Cs
\Bs) \Is
\]
but
\[
\trans{T'}(\lambda x y z . y (x z) x) = \abs{T'}{x, y}{\Cs' y x x} =
\abs{T'}{x}{\Cs (\Cs \Cs' x) x} = \Ss (\Bs \Cs (\Cs \Cs')) \Is.
\]
In~\cite{JoyRaywardBurton1985} it is erroneously claimed that an
algorithm~\mbox{Abs/Dash/1}, which is essentially~$T''$ with the
equations~$(4)$ and~$(5)$ removed and the equation~$(3)$ moved after
the equation~$(8)$, is
equivalent\footnote{In~\cite{JoyRaywardBurton1985} it is not
  completely clear what the precise algorithm based on ``optimisation
  rules'' actually is, but the ambiguity does not affect the fact that
  the formulations are not equivalent.} to~$T'$. Like~$T''$, the
algorithm~\mbox{Abs/Dash/1} is not equivalent to~$T'$ even for
translating lambda-terms in $\beta$-normal form.

In~\cite[Chapter~16]{Jones1987} it is suggested that Turner's
algorithm may be improved by using instead of~$\Bs'$ the
combinator~$\Bs^*$ defined by
\[
\Bs^* = \lambda f x y z . f (x (y z))
\]
Modified Turner's algorithm~$T_*$ is defined like algorithm~$T$ except
that the equation~$(6)$ is removed, the following equation is added
after the third one
\[
\begin{array}{rcll}
  \abs{T_*}{x}{s t} &=& \Bs^* s t_1 t_2 & \text{if } x \notin
  \FV(s) \text{ and } \abs{T_*}{x}{t} = \Bs t_1 t_2
\end{array}
\]
and the equation~$(9)$ is moved after the added one. A variant~$T_*'$
of modified Turner's algorithm is defined by the optimisations below.
\[
\begin{array}{lrcl}
  (1)&\Opt{\Ss (\Ks s) (\Ks t)} &=& \Ks (s t) \\
  (2)&\Opt{\Ss (\Ks s) \Is} &=& s \\
  (3)&\Opt{\Ss (\Ks u) (\Bs s t)} &=& \Bs^* u s t \\
  (4)&\Opt{\Ss (\Ks s) t} &=& \Bs s t \\
  (5)&\Opt{\Ss (\Bs u s) (\Ks t)} &=& \Cs' u s t \\
  (6)&\Opt{\Ss (\Bs^* u s_1 s_2) (\Ks t)} &=& \Cs' u (\Bs s_1 s_2) t \\
  (7)&\Opt{\Ss s (\Ks t)} &=& \Cs s t \\
  (8)&\Opt{\Ss (\Bs u s) t} &=& \Ss' u s t \\
  (9)&\Opt{\Ss (\Bs^* u s_1 s_2) t} &=& \Ss' u (\Bs s_1 s_2) t \\
  (10)&\Opt{\Ss s t} &=& \Ss s t
\end{array}
\]
Of course, the algorithms~$T_*$ and~$T_*'$ are not equivalent, which
may be seen by again considering the counterexamples against the
equivalence of~$S$ and~$S'$. We will show that~$T_*$ and~$T_*'$ are
equivalent as far as translating lambda-terms in $\beta$-normal form
is concerned. Actually, another variant~$T_*''$ of Turner's modified
algorithm presented in~\cite[Chapter~16]{Jones1987}, which is~$T_*'$
with the equations~$(6)$ and~$(9)$ removed. This algorithm~$T_*''$ is
not equivalent to~$T_*$ even for translating closed $\beta$-normal
forms. For instance, we have
\[
\trans{T_*}(\lambda x y . x (x (x y)) x) = \abs{T_*}{x}{\Cs' x (\Bs x
  x) x} = \Ss (\Ss \Cs' (\Ss \Bs \Is)) \Is
\]
but
\[
\trans{T_*''}(\lambda x y . x (x (x y)) x) = \abs{T_*''}{x}{\Opt{\Ss
    (\Bs^* x x x) (\Ks x)}} = \abs{T_*''}{x}{\Cs (\Bs^* x x x) x} =
\Ss' \Cs (\Ss (\Ss \Bs^* \Is) \Is) \Is.
\]

\section{Equivalence of~$\trans{T}$ and~$\trans{T'}$ for
  $\beta$-normal forms}\label{sec_equiv}

In this section we show that the translations induced by the
algorithms~$T$ and~$T'$ are equivalent for terms in $\beta$-normal
form.

\begin{lemma}\label{lem_t_0}~
  \begin{enumerate}
  \item If $x \notin \FV(t)$ then $\abs{T'}{x}{t} = \Ks t$.
  \item If $x \notin \FV(t)$ then $\abs{T'}{x}{t x} = t$.
  \end{enumerate}
\end{lemma}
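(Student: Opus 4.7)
The plan is to prove part~(1) by structural induction on $t \in \CL(B)$, and then derive part~(2) as a short direct computation. Recall that $T'$ is given by the three equations of $\abfp$, with the function $\Opt{}$ as defined at the beginning of Section~\ref{sec_turner}.

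For part~(1), assume $x \notin \FV(t)$. If $t$ is a variable~$y$, then $y \neq x$, so the first two equations defining~$T'$ do not fire and the third yields $\abs{T'}{x}{y} = \Ks y$. If $t$ is a basis element, the same equation applies and gives $\Ks t$. For the inductive step, let $t = s u$ with $x \notin \FV(s) \cup \FV(u)$. By the induction hypothesis, $\abs{T'}{x}{s} = \Ks s$ and $\abs{T'}{x}{u} = \Ks u$, hence
\[
\abs{T'}{x}{s u} = \Opt{\Ss (\Ks s) (\Ks u)}.
\]
Here I would check the ordered list of optimisation rules: rule~(1) matches the pattern $\Ss (\Ks s)(\Ks t)$ and therefore fires first, giving $\Ks (s u)$, as required.

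For part~(2), I would simply compute: applying the application equation of~$T'$ together with the second equation for the variable case,
\[
\abs{T'}{x}{t x} = \Opt{\Ss (\abs{T'}{x}{t}) (\abs{T'}{x}{x})} = \Opt{\Ss (\Ks t) \Is},
\]
using part~(1) for the left subterm and $\abs{T'}{x}{x} = \Is$ for the right one. Now rule~(1) of $\Opt{}$ does not apply, because $\Is$ is not syntactically of the form $\Ks r$; rule~(2) does apply and rewrites the expression to~$t$.

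There is no real obstacle here: the whole argument is a routine structural induction, and the only thing one has to check with care is that the intended optimisation rule is genuinely the first one in the ordered list that matches the syntactic shape of the expression produced by the recursive call. This is a straightforward case analysis on the patterns in the left-hand sides of rules~(1)--(10).
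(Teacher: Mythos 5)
Your proof is correct and follows essentially the same route as the paper: structural induction on $t$ for part~(1), using optimisation rule~(1) in the application case, and then a direct computation for part~(2) via optimisation rule~(2), checking that rule~(1) does not fire because $\Is$ is not of the form $\Ks r$. Nothing further is needed.
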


\begin{proof}~
  \begin{enumerate}
  \item Induction on the structure of~$t$. If~$t$ is not an
    application then $\abs{T'}{x}{t} = \Ks t$ by the definition
    of~$T'$. So assume $t = t_1 t_2$. Then $\abs{T'}{x}{t} = \Opt{\Ss
      (\abs{T'}{x}{t_1}) (\abs{T'}{x}{t_2})}$. By the inductive
    hypothesis $\abs{T'}{x}{t_1} = \Ks t_1$ and $\abs{T'}{x}{t_2} =
    \Ks t_2$. Thus $\abs{T'}{x}{t} = \Opt{\Ss (\Ks t_1) (\Ks t_2)} =
    \Ks (t_1 t_2) = \Ks t$.
  \item Using the previous point we have $\abs{T'}{x}{t x} = \Opt{\Ss
      (\abs{T'}{x}{t}) \Is} = \Opt{\Ss (\Ks t) \Is} = t$.
  \end{enumerate}
\end{proof}

\begin{definition}
  A term is \emph{$T$-normal} if it does not contain subterms of the
  form $\Ks t_1 t_2$, $\Is t$, $\Bs t_1 t_2 t_3$ or $\Bs' t_1 t_2 t_3
  t_4$.
\end{definition}

\begin{lemma}\label{lem_t_1}
  Let~$t$ be $T$-normal.
  \begin{enumerate}
  \item If $\abs{T}{x}{t} = \Ks s$ then $s =
    t$ and $x \notin \FV(t)$.
  \item If $\abs{T}{x}{t} = \Is$ then $t = x$.
  \item If $\abs{T}{x}{t} = \Bs t_1 t_2$ then $t = t_1 t_2'$, $x
    \notin \FV(t_1)$ and $t_2 = \abs{T}{x}{t_2'}$.
  \item If $\abs{T}{x}{t} = \Bs' t_1 t_2 t_3$ then $t = t_1 t_2 t_3'$,
    $x \notin \FV(t_1t_2)$ and $t_3 = \abs{T}{x}{t_3'}$.
  \end{enumerate}
\end{lemma}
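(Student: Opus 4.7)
The plan is to prove each of the four items by case analysis on which defining equation of~$T$ was used to compute $\abs{T}{x}{t}$. Since the equations of~$T$ are mutually exclusive (the first applicable equation is the one that fires), exactly one equation~$(i)$ produced the given result, and from the shape of the right-hand side of~$(i)$ we can read back what~$t$ must have been.

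For each item I will simply list which equations could possibly yield the prescribed shape. Equations $(1)$, $(2)$, and $(4)$--$(11)$ all have right-hand sides with fixed head combinators, respectively $\Ks$, $\Is$, $\Cs$, $\Ss$, $\Bs'$, $\Cs'$, $\Ss'$, $\Bs$, $\Cs$, $\Ss$. Thus for item~(1) only equation~(1) can produce a term of the form $\Ks s$ (with necessarily $s = t$ and $x \notin \FV(t)$); for item~(2) only equation~(2) can produce the bare atom~$\Is$ (with necessarily $t = x$); for item~(3) only equation~(9) can produce a head-$\Bs$ two-argument term $\Bs t_1 t_2$, yielding $t = t_1 t_2'$, $x \notin \FV(t_1)$, and $t_2 = \abs{T}{x}{t_2'}$; for item~(4) only equation~(6) can produce a head-$\Bs'$ three-argument term $\Bs' t_1 t_2 t_3$, yielding the analogous decomposition. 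In each case the stated conclusion is immediate from the premises and right-hand side of the matching equation.

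The main---and essentially the only---obstacle is equation~$(3)$, which says $\abs{T}{x}{u\, x} = u$ whenever $x \notin \FV(u)$. This equation does not introduce a head combinator but returns the subterm~$u$ of~$t$ verbatim, so a priori~$u$ could happen to equal $\Ks s$, $\Is$, $\Bs t_1 t_2$, or $\Bs' t_1 t_2 t_3$ and thereby match the target shape in any of the four items. This is precisely where $T$-normality of~$t$ enters the argument: in each of these four subcases $t = u\, x$ would contain as a subterm $\Ks s\, x$, $\Is\, x$, $\Bs t_1 t_2\, x$, or $\Bs' t_1 t_2 t_3\, x$ respectively, each of which is explicitly forbidden by the definition of $T$-normality. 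This contradicts the hypothesis, rules out equation~(3) in every item, and closes the case analysis.
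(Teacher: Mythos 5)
Your proof is correct and follows essentially the same approach as the paper: a case analysis on which defining equation of~$T$ fired, observing that all equations except~$(3)$ are distinguished by their fixed head combinator and arity, and that equation~$(3)$ returning a~$u$ of one of the four offending shapes would force~$t = u\,x$ to contain a subterm forbidden by $T$-normality. The paper spells this out only for the first item and declares the rest analogous, whereas you give the uniform argument for all four at once.
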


\begin{proof}
  We show the first point. It follows directly from the definition
  of~$T$ that either $s = t$ and $x \notin \FV(t)$, or $t = \Ks s x$
  with $x \notin \FV(s)$. The second case is impossible because~$t$ is
  $T$-normal.

  The proofs for the remaining points are analogous.
\end{proof}

\begin{lemma}\label{lem_t_2}
  If~$t$ is $T$-normal then $\abs{T}{x}{t} = \abs{T'}{x}{t}$.
\end{lemma}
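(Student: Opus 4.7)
The proof proceeds by structural induction on $t$, case-splitting on which of the eleven defining equations of $T$ applies. The first three cases are immediate: equation (1) of $T$ matches Lemma~\ref{lem_t_0}(1) for $T'$, equation (2) gives $\Is$ on both sides, and equation (3) of $T$ matches Lemma~\ref{lem_t_0}(2) for $T'$.

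For the remaining cases, $t = t_1 t_2$ is an application handled by one of equations (4)--(11) of $T$, and we must show
\[
\abs{T}{x}{t} \;=\; \Opt{\Ss\,(\abs{T'}{x}{t_1})\,(\abs{T'}{x}{t_2})}.
\]
Each inner call $\abs{T'}{x}{t_i}$ is evaluated in one of three ways: via Lemma~\ref{lem_t_0}(1) when $x \notin \FV(t_i)$, via Lemma~\ref{lem_t_0}(2) when $t_i$ has the form $t_i' x$ with $x \notin \FV(t_i')$, or via the inductive hypothesis on the proper $T$-normal subterm $t_i$. It then remains to identify the first applicable optimisation of $T'$ at the outer level and check that it yields the same term as the corresponding equation of $T$. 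Two key observations drive this step: (i) Lemma~\ref{lem_t_1} shows that $\abs{T}{x}{t'}$ can have head $\Ks$, $\Is$, $\Bs$, or $\Bs'$ with the arities relevant to the optimisations of $T'$ only under very specific structural conditions on $t'$, so whenever those conditions fail, the earlier optimisations of $T'$ simply do not apply; and (ii) the $T$-normality of $t$ rules out subterms of the forms $\Ks s_1 s_2$, $\Is s$, $\Bs s_1 s_2 s_3$, $\Bs' s_1 s_2 s_3 s_4$, so whenever a subterm like $u$ in a pattern $u x t_2$ inside $t$ would, by having the wrong head, combine with the adjacent arguments to create such a forbidden pattern, $u$ cannot in fact have that head shape.

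The main complication lies in the cases where $T$ applies equation (7) or (8), producing outputs with head $\Cs'$ or $\Ss'$. Here one must compute $\abs{T'}{x}{u s}$, which by the inductive hypothesis equals $\abs{T}{x}{u s}$, and this splits further according to whether $u$ is itself an application: if $u$ is not an application, equation (9) of $T$ fires on $u s$ giving $\Bs u (\abs{T}{x}{s})$, whereas if $u = u_1 u_2$, equation (6) fires instead giving $\Bs' u_1 u_2 (\abs{T}{x}{s})$. Consequently each of these two outer cases bifurcates, triggering optimisations (5) or (6) of $T'$ in the $\Cs'$-producing case and (8) or (9) in the $\Ss'$-producing case, and in all four subcases the optimisation yields exactly the term prescribed by the relevant equation of $T$. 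This internal structural analysis of $u$, together with careful verification in each subcase that no earlier optimisation of $T'$ applies, constitutes the main bookkeeping obstacle; no deeper conceptual difficulty arises.
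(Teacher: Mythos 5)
Your proposal is correct and follows essentially the same route as the paper: structural induction with a case split on which equation of $T$ fires, Lemma~\ref{lem_t_0} for the first three cases, Lemma~\ref{lem_t_1} together with $T$-normality to rule out the earlier optimisations of $T'$, and a sub-split on whether the head $u$ is an application in the $\Cs'$- and $\Ss'$-producing cases. The only cosmetic difference is that you evaluate the inner call $\abs{T'}{x}{t_1 t_2}$ via the inductive hypothesis and the structure of $T$'s equations, whereas the paper unfolds the definition of $T'$ one step using Lemma~\ref{lem_t_0} and the optimisation list; both routes produce the same terms.
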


\begin{proof}
  Induction on~$t$. We distinguish the cases according to which
  equation in the definition of~$T$ is used. If $x \notin \FV(t)$ then
  $\abs{T}{x}{t} = \Ks t = \abs{T'}{x}{t}$ by Lemma~\ref{lem_t_0}. If
  $t = x$ then $\abs{T}{x}{x} = \Is = \abs{T'}{x}{x}$. If $t = s x$
  with $x \notin \FV(s)$ then $\abs{T}{x}{t} = s = \Opt{\Ss (\Ks s)
    \Is} = \Opt{\Ss (\abs{T'}{x}{s}) (\abs{T'}{x}{x})} =
  \abs{T'}{x}{t}$, where in the penultimate equation we use
  Lemma~\ref{lem_t_0}.

  Assume $t = u x s$ and $x \notin \FV(us)$. Then $\abs{T}{x}{t} = \Cs
  u s$. On the other hand $\abs{T'}{x}{t} = \Opt{\Ss (\abs{T'}{x}{u
      x}) (\abs{T'}{x}{s})}$. We have $\abs{T'}{x}{s} = \Ks s$ and
  $\abs{T'}{x}{u x} = u$ by Lemma~\ref{lem_t_0}. It suffices to show
  $\Opt{\Ss u (\Ks s)} = \Cs u s$, for which it suffices that~$u$ does
  not have the form $\Ks u'$ or $\Bs u_1 u_2$. But this is the case
  because~$u x$ is $T$-normal.

  Assume $t = u x s$, $x \notin \FV(u)$ and $x \in \FV(s)$. Then
  $\abs{T}{x}{t} = \Ss u (\abs{T}{x}{s})$. We have $\abs{T}{x}{s} =
  \abs{T'}{x}{s}$ by the inductive hypothesis, and $\abs{T'}{x}{u x} =
  u$ by Lemma~\ref{lem_t_0}. Since $\abs{T'}{x}{t} = \Opt{\Ss
    (\abs{T'}{x}{u x}) (\abs{T'}{x}{s})} = \Opt{\Ss u
    (\abs{T}{x}{s})}$, it suffices to show that~$u$ does not have the
  form $\Ks u'$ or $\Bs u_1 u_2$, and $\abs{T}{x}{s}$ does not have
  the form $\Ks s'$. This follows from the fact that~$u x$ is
  $T$-normal and from Lemma~\ref{lem_t_1}.

  Assume $t = t_1 t_2 t_3$, $x \notin \FV(t_1t_2)$ and $x \in
  \FV(t_3)$, $t_3 \ne x$. Then $\abs{T}{x}{t} = \Bs' t_1 t_2
  (\abs{T}{x}{t_3})$. We have $\abs{T'}{x}{t_3} = \abs{T}{x}{t_3}$ by
  the inductive hypothesis, and $\abs{T'}{x}{t_1t_2} = \Ks (t_1 t_2)$
  by Lemma~\ref{lem_t_0}. Thus $\abs{T'}{x}{t} = \Opt{\Ss (\Ks (t_1
    t_2)) (\abs{T}{x}{t_3})}$, so it suffices to show that
  $\abs{T}{x}{t_3}$ does not have the form~$\Ks t_3'$ or~$\Is$. But if
  this is not the case then $x \notin \FV(t_3)$ or $t_3 = x$ by
  Lemma~\ref{lem_t_1}, which contradicts our assumptions.

  Assume $t = t_1 t_2 t_3$, $x \notin \FV(t_1t_3)$, $x \in \FV(t_2)$,
  $t_2 \ne x$. Then $\abs{T}{x}{t} = \Cs' t_1 (\abs{T}{x}{t_2})
  t_3$. We have $\abs{T'}{x}{t_2} = \abs{T}{x}{t_2}$ by the inductive
  hypothesis, and $\abs{T'}{x}{t_1} = \Ks t_1$, $\abs{T'}{x}{t_3} =
  \Ks t_3$ by Lemma~\ref{lem_t_0}. Thus~$\abs{T'}{x}{t_2}$ does not
  have the form~$\Ks t_2'$ or~$\Is$. First assume~$t_1$ is not an
  application. Then $\abs{T'}{x}{t_1 t_2} = \Opt{\Ss (\Ks t_1)
    (\abs{T'}{x}{t_2})} = \Bs t_1 (\abs{T'}{x}{t_2})$. Thus
  $\abs{T'}{x}{t} = \Opt{\Ss (\Bs t_1 (\abs{T'}{x}{t_2})) (\Ks t_3)} =
  \Cs' t_1 (\abs{T}{x}{t_2}) t_3 = \abs{T}{x}{t}$. If $t_1 = u_1 u_2$
  then $\abs{T'}{x}{t_1 t_2} = \Opt{\Ss (\Ks (u_1 u_2))
    (\abs{T'}{x}{t_2})} = \Bs' u_1 u_2 (\abs{T'}{x}{t_2})$. Thus
  $\abs{T'}{x}{t} = \Opt{\Ss (\Bs' u_1 u_2 (\abs{T'}{x}{t_2})) (\Ks
    t_3)} = \Cs' (u_1 u_2) (\abs{T}{x}{t_2}) t_3 = \abs{T}{x}{t}$.

  Assume $t = t_1 t_2 t_3$, $x \notin \FV(t_1)$, $x \in \FV(t_2)$, $x
  \in \FV(t_3)$, $t_2 \ne x$. Then $\abs{T}{x}{t} = \Ss' t_1
  (\abs{T}{x}{t_2}) (\abs{T}{x}{t_3})$. We have $\abs{T'}{x}{t_2} =
  \abs{T}{x}{t_2}$, $\abs{T'}{x}{t_3} = \abs{T}{x}{t_3}$ by the
  inductive hypothesis, and $\abs{T'}{x}{t_1} = \Ks t_1$ by
  Lemma~\ref{lem_t_0}. First assume~$t_1$ is not an application. Then
  like in the previous paragraph we obtain $\abs{T'}{x}{t_1 t_2} = \Bs
  t_1 (\abs{T'}{x}{t_2})$. Thus $\abs{T'}{x}{t} = \Opt{\Ss (\Bs t_1
    (\abs{T}{x}{t_2})) (\abs{T}{x}{t_3})}$. If $t_1 = u_1 u_2$ then
  like in the previous paragraph we obtain $\abs{T'}{x}{t} = \Opt{\Ss
    (\Bs' u_1 u_2 (\abs{T}{x}{t_2})) (\abs{T}{x}{t_3})}$. In each case
  it suffices to show that~$\abs{T}{x}{t_3}$ does not have the
  form~$\Ks t_3'$. This follows from Lemma~\ref{lem_t_1} and $x \in
  \FV(t_3)$.

  Assume $t = t_1 t_2$ and $\abs{T}{x}{t} = \Bs t_1
  (\abs{T}{x}{t_2})$. Then $x \notin \FV(t_1)$, $t_1$ is not an
  application (otherwise equation~$(6)$ would apply), $x \in \FV(t_2)$
  and $t_2 \ne x$. We have $\abs{T'}{x}{t_2} = \abs{T}{x}{t_2}$ by the
  inductive hypothesis, and $\abs{T'}{x}{t_1} = \Ks t_1$ by
  Lemma~\ref{lem_t_0}. Hence $\abs{T'}{x}{t} = \Opt{\Ss (\Ks t_1)
    (\abs{T}{x}{t_2})}$. Since~$t_1$ is not an application, it thus
  suffices to show that~$\abs{T}{x}{t_2}$ does not have the form~$\Ks
  t_2'$ or~$\Is$. But this follows from Lemma~\ref{lem_t_1}, $x \in
  \FV(t_2)$ and $t_2 \ne x$.

  Assume $t = t_1 t_2$ and $\abs{T}{x}{t} = \Cs (\abs{T}{x}{t_1})
  t_2$. Then $x \in \FV(t_1)$, $x \notin \FV(t_2)$, and if $t_1 =
  u_1u_2$ then $x \in \FV(u_1)$. We have $\abs{T'}{x}{t_1} =
  \abs{T}{x}{t_1}$ by the inductive hypothesis, and $\abs{T'}{x}{t_2}
  = \Ks t_2$ by Lemma~\ref{lem_t_0}. Thus $\abs{T'}{x}{t} = \Opt{\Ss
    (\abs{T}{x}{t_1}) (\Ks t_2)}$. So it suffices to show that
  $\abs{T}{x}{t_1}$ does not have the form~$\Ks t_1'$, $\Bs u_1 u_2$
  or $\Bs' u_1 u_2 u_3$. But this follows from Lemma~\ref{lem_t_1} and
  our assumptions on~$t_1$.

  Finally, assume $t = t_1 t_2$ and $\abs{T}{x}{t} = \Ss
  (\abs{T}{x}{t_1}) (\abs{T}{x}{t_2})$. Then $x \in \FV(t_1)$, $x \in
  \FV(t_2)$, and if $t_1=u_1u_2$ then $x \in \FV(u_1)$. We have
  $\abs{T'}{x}{t_1} = \abs{T}{x}{t_1}$ and $\abs{T'}{x}{t_2} =
  \abs{T}{x}{t_2}$ by the inductive hypothesis. Thus $\abs{T'}{x}{t} =
  \Opt{\Ss (\abs{T'}{x}{t_1}) (\abs{T'}{x}{t_2})}$. So it suffices to
  show that~$\abs{T}{x}{t_1}$ does not have the form~$\Ks t_1'$, $\Bs
  u_1 u_2$ or~$\Bs' u_1 u_2 u_3$, and~$\abs{T'}{x}{t_2}$ does not have
  the form~$\Ks t_2'$. This follows from Lemma~\ref{lem_t_1} and our
  assumptions on~$t_1$ and~$t_2$.
\end{proof}
 
\begin{lemma}\label{lem_t_3}
  If~$t$ is $T$-normal then so is $\abs{T}{x}{t}$.
\end{lemma}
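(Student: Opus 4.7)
The plan is to prove the statement by induction on the structure of~$t$, performing a case analysis on which of the eleven equations defining~$T$ applies. In every case the value $\abs{T}{x}{t}$ is obtained by taking one of the combinators $\Ks$, $\Is$, $\Bs$, $\Cs$, $\Ss$, $\Bs'$, $\Cs'$, $\Ss'$ (or a proper subterm of~$t$, as in equation~(3)) and applying it to some arguments, each of which is either a proper subterm of~$t$ or the result of a recursive call $\abs{T}{x}{t'}$ on a proper subterm~$t'$. For each case I need to verify two things: that every ``outer combinator'' application appearing in the result is not itself of a forbidden shape, and that all immediate arguments fed into the result are already $T$-normal.

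The second check is essentially free: syntactic subterms of the given $T$-normal~$t$ are $T$-normal because the property is plainly closed under taking subterms, and the recursive results $\abs{T}{x}{t'}$ are $T$-normal by the induction hypothesis. For the first check, one inspects the right-hand side of each equation and looks at the head combinator. In most cases the head (for instance $\Cs$, $\Ss$, $\Cs'$, $\Ss'$) simply does not match any of the four forbidden patterns. The remaining three cases, in which $\Ks$, $\Bs'$, or $\Bs$ appears as head of the output, are the only ones requiring a genuine argument-count inspection: in equation~(1) the output $\Ks t$ carries only one argument whereas the pattern $\Ks t_1 t_2$ requires two; in equation~(6) the output $\Bs' u s (\abs{T}{x}{t})$ carries three arguments whereas $\Bs' t_1 t_2 t_3 t_4$ requires four; and in equation~(9) the output $\Bs s (\abs{T}{x}{t})$ carries two arguments whereas $\Bs t_1 t_2 t_3$ requires three. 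Because the recursive result sits inside its own parentheses under the left-associative application convention, it cannot be absorbed as an extra argument to the outer combinator and thereby complete a forbidden shape.

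The principal, though minor, obstacle is just the bookkeeping of keeping the eleven cases straight and of not confusing $\Bs t_1 (t_2 t_3)$ with $\Bs t_1 t_2 t_3$ under left-associative parsing (and similarly for $\Bs'$). Once argument counts at the outer head are compared against the forbidden patterns and the induction hypothesis is invoked for each recursive call, the induction proceeds mechanically through cases (1)--(11) and the conclusion follows.
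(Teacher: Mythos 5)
Your proof is correct and follows exactly the route the paper takes: the paper's own proof of this lemma is simply ``by induction on the structure of~$t$'', and your case analysis over the eleven equations, with the observation that each right-hand side applies its head combinator to too few arguments to match a forbidden pattern while all arguments are $T$-normal by subterm-closure or the induction hypothesis, is precisely the intended elaboration. Nothing is missing.
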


\begin{proof}
  By induction on the structure of~$t$.
\end{proof}

\begin{lemma}\label{lem_t_4}
  If~$t$ is in $\beta$-normal form then~$\trans{T}(t)$ is $T$-normal.
\end{lemma}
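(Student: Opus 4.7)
I would proceed by induction on the structure of $t$, exploiting the standard shape of a $\beta$-normal form: every $\beta$-nf is either a variable~$x$, an abstraction~$\lambda x.t'$ with~$t'$ in $\beta$-nf, or an application of the shape $y s_1 \cdots s_m$ with $y \in V$, $m \ge 1$, and each~$s_i$ in $\beta$-nf. The subterms of a $\beta$-nf are themselves $\beta$-nfs, so the induction hypothesis applies directly to the components we will consider.

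The variable and abstraction cases are immediate. For $t = x$, we have $\trans{T}(x) = x$, which has no proper subterms and hence is trivially $T$-normal. For $t = \lambda x . t'$, the IH gives that $\trans{T}(t')$ is $T$-normal, and then Lemma~\ref{lem_t_3} yields that $\trans{T}(t) = \abs{T}{x}{\trans{T}(t')}$ is $T$-normal.

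The real content is in the application case $t = y s_1 \cdots s_m$. By the IH each $\trans{T}(s_i)$ is $T$-normal, so every subterm of $\trans{T}(t) = y \, \trans{T}(s_1) \cdots \trans{T}(s_m)$ that sits strictly inside some $\trans{T}(s_i)$ is $T$-normal. The only remaining subterms are the ``spine'' terms $y \, \trans{T}(s_1) \cdots \trans{T}(s_k)$ for $0 \le k \le m$. Each of these has the variable~$y$ as its leftmost leaf, and therefore cannot match any of the forbidden shapes $\Ks v_1 v_2$, $\Is v$, $\Bs v_1 v_2 v_3$, or $\Bs' v_1 v_2 v_3 v_4$, since each of those shapes has a combinator constant rather than a variable at its leftmost leaf.

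The main obstacle is precisely the application case, and it turns on one structural fact: the head of every application in a $\beta$-nf is a variable, never an abstraction or a combinator constant, and the translation~$\trans{T}$ carries this variable head straight through to~$\trans{T}(t)$. Once this is observed, the proof reduces to a straightforward induction together with a single appeal to Lemma~\ref{lem_t_3}. (If one preferred to avoid the explicit spine argument, one could equivalently strengthen the IH to ``$\trans{T}(t)$ is $T$-normal, and if~$t$ is not an abstraction then the leftmost leaf of~$\trans{T}(t)$ is a variable'', which makes the application case follow by a direct case analysis on the top of $\trans{T}(s)\trans{T}(u)$.)
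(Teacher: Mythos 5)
Your proof is correct and follows essentially the same route as the paper: induction on the $\beta$-normal form, with the abstraction case handled by Lemma~\ref{lem_t_3} and the application case by observing that the spine of $y\,\trans{T}(s_1)\cdots\trans{T}(s_m)$ is headed by a variable and so cannot match any forbidden pattern. The paper merely states that the application case ``follows directly from the inductive hypothesis''; your spine argument is exactly the justification it leaves implicit.
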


\begin{proof}
  Induction on~$t$. Because~$t$ is in $\beta$-normal form, either $t =
  x t_1 \ldots t_n$ or $t = \lambda x . s$. In the first case the
  claim follows directly from the inductive hypothesis. In the second
  case we have $\trans{T}(t) = \abs{T}{x}{\trans{T}(s)}$. By the
  inductive hypothesis~$\trans{T}(s)$ is
  $T$-normal. Hence~$\trans{T}(t)$ is $T$-normal by
  Lemma~\ref{lem_t_3}.
\end{proof}

\begin{theorem}\label{thm_t}
  If $t$ is in $\beta$-normal form then $\trans{T}(t) =
  \trans{T'}(t)$.
\end{theorem}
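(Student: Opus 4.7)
The plan is to prove this by structural induction on the $\beta$-normal form $t$, exploiting the structural decomposition already used in Lemma~\ref{lem_t_4}: every $\beta$-normal $t$ has the shape $x\,t_1\cdots t_n$ with each $t_i$ in $\beta$-normal form, or $t = \lambda x.s$ with $s$ in $\beta$-normal form. The induction hypothesis says that $\trans{T}(t') = \trans{T'}(t')$ for every $\beta$-normal subterm $t'$ of $t$.

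First I would dispose of the application case: if $t = x\,t_1\cdots t_n$ with $n \geq 0$, then the recursive clause $\trans{A}(s\,u) = \trans{A}(s)\trans{A}(u)$ is shared by $\trans{T}$ and $\trans{T'}$, so the result is immediate from applying the inductive hypothesis to each $t_i$.

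The essential case is $t = \lambda x.s$ with $s$ in $\beta$-normal form. Here the definitions give $\trans{T}(t) = \abs{T}{x}{\trans{T}(s)}$ and $\trans{T'}(t) = \abs{T'}{x}{\trans{T'}(s)}$. The induction hypothesis yields $\trans{T}(s) = \trans{T'}(s)$, so it suffices to show $\abs{T}{x}{\trans{T}(s)} = \abs{T'}{x}{\trans{T}(s)}$. Now Lemma~\ref{lem_t_4} tells us that $\trans{T}(s)$ is $T$-normal, and then Lemma~\ref{lem_t_2} delivers exactly the required equality.

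There is essentially no obstacle at this point, since all the real combinatorial work has been absorbed into Lemmas~\ref{lem_t_2}--\ref{lem_t_4}. The only thing worth being careful about is that the inductive hypothesis applied to $s$ in the abstraction case requires $s$ itself to be in $\beta$-normal form, which is immediate from the fact that $\lambda x.s$ is in $\beta$-normal form. One could, alternatively, prove a slightly stronger statement by induction on $t \in \Lambda$ in $\beta$-normal form, simultaneously asserting both $\trans{T}(t) = \trans{T'}(t)$ and that $\trans{T}(t)$ is $T$-normal, but this would merely repackage Lemma~\ref{lem_t_4} and is not needed.
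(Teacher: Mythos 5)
Your proof is correct and follows essentially the same route as the paper's: induction on the $\beta$-normal term, with the application case immediate from the homomorphic clause of the induced translation, and the abstraction case settled by combining the inductive hypothesis with Lemma~\ref{lem_t_4} ($T$-normality of $\trans{T}(s)$) and Lemma~\ref{lem_t_2}. The only cosmetic difference is that you decompose applications as $x\,t_1\cdots t_n$ where the paper simply takes $t=t_1t_2$; both work since subterms of a $\beta$-normal form are $\beta$-normal.
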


\begin{proof}
  Induction on~$t$. If $t = x$ then $\trans{T}(t) = x =
  \trans{T'}(t)$. If $t = t_1 t_2$ then the claim follows directly
  from the inductive hypothesis. So assume $t = \lambda x . s$. Then
  $\trans{T}(t) = \abs{T}{x}{\trans{T}(s)}$. By the inductive
  hypothesis $\trans{T'}(s) = \trans{T}(s)$. By Lemma~\ref{lem_t_4} we
  have that~$\trans{T'}(s)$ is $T$-normal. Thus $\trans{T}(t) =
  \abs{T}{x}{\trans{T'}(s)} = \abs{T'}{x}{\trans{T'}(s)} =
  \trans{T'}(t)$ by Lemma~\ref{lem_t_2}.
\end{proof}

\section{Equivalence of~$T_{-\eta}$
  and~$T_{-\eta}'$}\label{sec_equiv_eta}

The reason for the non-equivalence of~$T$ and~$T'$ is the fact that
these algorithms may effectively perform some $\eta$-contractions. We
show that if this is disallowed, then the algorithms become
equivalent. In other words, we show that~$T_{-\eta}$ and~$T_{-\eta}'$
are equivalent (see Section~\ref{sec_turner}).

\begin{theorem}\label{thm_t_eta}
  For every lambda-term~$t$ and every variable~$x$ we have
  $\abs{T_{-\eta}}{x}{t} = \abs{T_{-\eta}'}{x}{t}$.
\end{theorem}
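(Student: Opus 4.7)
The plan is to prove Theorem~\ref{thm_t_eta} by structural induction on $t$, following the same template as Lemma~\ref{lem_t_2}, but benefiting from the fact that removing the ``$\eta$-rules'' (equations $(3)$--$(5)$ of $T$ and optimisation $(2)$ of $T'$) eliminates the need for the $T$-normal hypothesis -- which is precisely what forced the restriction to $\beta$-normal forms in Theorem~\ref{thm_t}. First I would establish the analogue of Lemma~\ref{lem_t_0}(1) for $T_{-\eta}'$, namely that $x \notin \FV(t)$ implies $\abs{T_{-\eta}'}{x}{t} = \Ks t$, by the same easy induction. There is no analogue of Lemma~\ref{lem_t_0}(2), whose proof relied on precisely the removed optimisation.

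Next I would prove the analogue of Lemma~\ref{lem_t_1} without the $T$-normality hypothesis: for every term $t$, if $\abs{T_{-\eta}}{x}{t}$ equals $\Ks s$, $\Is$, $\Bs t_1 t_2$, or $\Bs' t_1 t_2 t_3$, then $t$ and the $t_i$ admit the structural description given in Lemma~\ref{lem_t_1}. The $T$-normality hypothesis was needed there only to rule out the $\eta$-rules $(3)$--$(5)$ as alternative producers of these shapes (e.g., $\abs{T}{x}{\Is x} = \Is$ via $(3)$); once those equations are dropped, each of the four shapes can arise in $T_{-\eta}$ from exactly one equation, identified by the leading combinator of its right-hand side, and the characterization follows directly from the definition.

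The main induction then performs a case analysis on which of the eight remaining equations of $T_{-\eta}$ applies, mirroring the proof of Lemma~\ref{lem_t_2}. In each case one rewrites $\abs{T_{-\eta}'}{x}{t}$ as an $\Opt{}$ applied to an $\Ss$-application, replaces each inner call $\abs{T_{-\eta}'}{x}{u}$ with $x \notin \FV(u)$ by $\Ks u$ using the auxiliary lemma and each of the remaining inner calls by $\abs{T_{-\eta}}{x}{u}$ via the induction hypothesis, and finally uses the characterization lemma to check that only the intended optimisation fires. Cases $(6)$--$(8)$ split further according to whether the leading subterm $u$ is atomic (so $\abs{T_{-\eta}'}{x}{u s}$ reduces to $\Bs u \cdot$ via optimisation $(4)$, later matched by $(5)$ or $(8)$) or itself an application $u_1 u_2$ (so $\abs{T_{-\eta}'}{x}{u s}$ reduces to $\Bs' u_1 u_2 \cdot$ via $(3)$, later matched by $(6)$ or $(9)$). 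The main obstacle is not any single step but the bookkeeping: one must check, for each of these eight-plus-subcases, that every higher-priority optimisation pattern is excluded. This is essentially the same verification as in Lemma~\ref{lem_t_2}, but now unconditional on $t$ since the characterization lemma no longer needs the $T$-normality assumption.
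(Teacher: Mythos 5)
Your proposal is correct and follows essentially the same route as the paper: induction on $t$ with a case analysis on which equation of $T_{-\eta}$ applies, supported by exactly the two observations the paper relies on --- that part (1) of Lemma~\ref{lem_t_0} survives for $T_{-\eta}'$ while part (2) is lost, and that Lemma~\ref{lem_t_1} holds without the $T$-normality hypothesis once the $\eta$-equations are removed --- so that the case analysis of Lemma~\ref{lem_t_2} goes through unconditionally. The only minor imprecision is that the atomic-versus-application split on the head subterm is needed only for the $\Cs'$ and $\Ss'$ cases (equations $(7)$ and $(8)$), not for equation $(6)$, where $x\notin\FV(us)$ makes optimisation $(3)$ apply directly.
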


\begin{proof}
  Induction on~$t$. We consider possible cases according to which
  equation in the definition of~$\abs{T_{-\eta}}{x}{t}$ is used. If $x
  \notin \FV(t)$ or $t = x$ then $\abs{T_{-\eta}}{x}{t} =
  \abs{T_{-\eta}'}{x}{t}$ follows directly from definitions.

  The remaining cases are shown by a straightforward modification of
  the proof of Lemma~\ref{lem_t_2}, noting that for~$T_{-\eta}$ the
  first point of Lemma~\ref{lem_t_0} still holds, Lemma~\ref{lem_t_1}
  holds without the assumption that~$t$ is $T$-normal, and the only
  places in the proof of Lemma~\ref{lem_t_2} where the second point of
  Lemma~\ref{lem_t_0} (which does not hold), $T$-normality or
  optimisation~$(2)$ are used directly is for the equations~$(3)$,
  $(4)$ and~$(5)$ not present in~$T_{-\eta}$.

  By way of an example we consider the case when $t = t_1 t_2 t_3$ and
  $\abs{T_{-\eta}}{x}{t} = \Bs' t_1 t_2
  (\abs{T_{-\eta}}{x}{t_3})$. Then $x \notin \FV(t_1t_2)$ and $x \in
  \FV(t_3)$. By the inductive hypothesis $\abs{T_{-\eta}'}{x}{t_3} =
  \abs{T_{-\eta}}{x}{t_3}$ and $\abs{T_{-\eta}'}{x}{t_1t_2} =
  \abs{T_{-\eta}}{x}{t_1t_2} = \Ks (t_1 t_2)$ because $x \notin
  \FV(t_1t_2)$. Thus $\abs{T_{-\eta}'}{x}{t} = \Opt{\Ss (\Ks (t_1
    t_2)) (\abs{T_{-\eta}}{x}{t_3})}$, so it suffices to show that
  $\abs{T_{-\eta}}{x}{t_3}$ does not have the form~$\Ks t_3'$. But if
  $\abs{T_{-\eta}}{x}{t_3} = \Ks t_3'$ then $x \notin \FV(t_3)$, which
  contradicts our assumption.
\end{proof}

\begin{corollary}
  For every lambda-term~$t$ we have $\trans{T_{-\eta}}(t) =
  \trans{T_{-\eta}'}(t)$.
\end{corollary}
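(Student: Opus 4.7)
The plan is to deduce this corollary from Theorem~\ref{thm_t_eta} by a straightforward structural induction on the lambda-term~$t$, using the recursive definition of the induced translation $\trans{A}$ given in Section~\ref{sec_abstr}.

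First I would handle the base case $t = x$: by definition $\trans{T_{-\eta}}(x) = x = \trans{T_{-\eta}'}(x)$, so nothing to do. For the application case $t = s_1 s_2$, both induced translations satisfy $\trans{A}(s_1 s_2) = \trans{A}(s_1) \trans{A}(s_2)$, so the two inductive hypotheses on~$s_1$ and~$s_2$ immediately yield the equality.

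The only case that requires any work is the abstraction case $t = \lambda x . s$. Here I would unfold both sides using the recursive clause for abstractions:
\[
\trans{T_{-\eta}}(\lambda x . s) = \abs{T_{-\eta}}{x}{\trans{T_{-\eta}}(s)}
\qquad\text{and}\qquad
\trans{T_{-\eta}'}(\lambda x . s) = \abs{T_{-\eta}'}{x}{\trans{T_{-\eta}'}(s)}.
\]
The inductive hypothesis applied to~$s$ gives $\trans{T_{-\eta}}(s) = \trans{T_{-\eta}'}(s)$; call this common combinatory term~$u$. Then Theorem~\ref{thm_t_eta}, applied with the lambda-term~$u$ and variable~$x$, yields $\abs{T_{-\eta}}{x}{u} = \abs{T_{-\eta}'}{x}{u}$, completing the case.

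There is no real obstacle: all the heavy lifting has already been done in Theorem~\ref{thm_t_eta}, which establishes agreement of the two abstraction algorithms on \emph{every} term. The corollary merely lifts this pointwise agreement through the compositional definition of $\trans{A}$. Note in particular that, unlike Theorem~\ref{thm_t}, we do \emph{not} need any $\beta$-normality assumption on~$t$ nor any $T$-normality hypothesis on intermediate combinatory terms, precisely because Theorem~\ref{thm_t_eta} holds without such restrictions once the $\eta$-performing clauses have been removed.
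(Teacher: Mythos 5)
Your proof is correct and is exactly the argument the paper intends: the corollary is stated without proof as an immediate consequence of Theorem~\ref{thm_t_eta}, obtained by the routine structural induction on~$t$ that you spell out, with the abstraction case discharged by applying the theorem to the translated body. Your closing remark that no $\beta$- or $T$-normality hypotheses are needed here is also accurate and matches the paper's framing.
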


\section{Other equivalences}\label{sec_other_equiv}

By modifying the proof of Theorem~\ref{thm_t} from
Section~\ref{sec_equiv} we can show the following.

\begin{theorem}\label{thm_s}
  If~$t$ is in $\beta$-normal form then $\trans{S}(t) =
  \trans{S'}(t)$.
\end{theorem}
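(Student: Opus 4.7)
The plan is to mirror the development of Section~\ref{sec_equiv} in the simpler setting of Schönfinkel's algorithm. First I would introduce a notion of \emph{$S$-normal} terms: those containing no subterm of the form $\Ks t_1 t_2$ or $\Is t$. Note that, unlike for $T$-normality, no clauses for $\Bs$ or $\Bs'$ are required, since the optimisations defining $S'$ inspect the arguments of $\Ss$ only for the shapes $\Ks u$ or $\Is$; any other head (including $\Bs$ or $\Cs$) falls through to the default $\Opt{\Ss s t} = \Ss s t$.

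Next I would prove four lemmas parallel to Lemmas~\ref{lem_t_0}--\ref{lem_t_4}. The $S'$-analogue of Lemma~\ref{lem_t_0} asserts that $\abs{S'}{x}{t} = \Ks t$ whenever $x \notin \FV(t)$ and $\abs{S'}{x}{t x} = t$ in the same case; both follow by a direct structural induction using optimisations~(1) and~(2) of~$S'$. The $S$-analogue of Lemma~\ref{lem_t_1} says that for $S$-normal $t$, if $\abs{S}{x}{t} = \Ks s$ then $s = t$ and $x \notin \FV(t)$, and if $\abs{S}{x}{t} = \Is$ then $t = x$; this is immediate from the six equations of $S$, since the unwanted possibility $t = \Ks s x$ is excluded by $S$-normality. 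The $S$-analogue of Lemma~\ref{lem_t_3} (``$\abs{S}{x}{t}$ is $S$-normal whenever $t$ is'') and of Lemma~\ref{lem_t_4} (``$\trans{S}(t)$ is $S$-normal whenever $t$ is in $\beta$-normal form'') are routine structural inductions: every right-hand side in the definition of $S$ is an application of $\Ks$, $\Is$, $\Bs$, $\Cs$, or $\Ss$ to at most two arguments, so no forbidden $\Ks t_1 t_2$ or $\Is t$ subterms are introduced.

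The substantive step is the analogue of Lemma~\ref{lem_t_2}: if $t$ is $S$-normal then $\abs{S}{x}{t} = \abs{S'}{x}{t}$. The proof is by induction on~$t$, with cases according to which of the six equations of $S$ applies. The $\eta$-case $t = s x$ with $x \notin \FV(s)$ uses optimisation~(2) of~$S'$ in exactly the way equation~(3) of~$T$ was handled in Lemma~\ref{lem_t_2}. The four application subcases (splitting on whether $x$ occurs in one, the other, both, or neither of the arguments, and whether the right argument equals~$x$) each reduce to verifying that the recursive calls on the $S'$ side do not accidentally trigger a higher-priority optimisation; this is precisely what the $S$-analogue of Lemma~\ref{lem_t_1} and the first point of the $S'$-analogue of Lemma~\ref{lem_t_0} guarantee, combined with the inductive hypothesis.

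With these lemmas in hand the proof of Theorem~\ref{thm_s} becomes a routine induction on~$t$ exactly parallel to that of Theorem~\ref{thm_t}: the variable and application cases are immediate, and for $t = \lambda x . s$ one uses that $\trans{S}(s)$ is $S$-normal and equals $\trans{S'}(s)$ by the inductive hypothesis, then applies the analogue of Lemma~\ref{lem_t_2}. The main obstacle, as for Theorem~\ref{thm_t}, is pinning down the correct notion of $S$-normality so that the analogues of both Lemma~\ref{lem_t_2} and Lemma~\ref{lem_t_3} go through simultaneously; the simplicity of the $S'$ optimisations (no $\Bs$- or $\Cs$-destructuring patterns) makes this noticeably cleaner than in the $T$ case, and in fact the case analysis in the analogue of Lemma~\ref{lem_t_2} collapses to about half the length of the $T$ version.
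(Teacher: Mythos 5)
Your proposal is correct and follows essentially the same route as the paper, which likewise defines $S$-normality as the absence of subterms of the form $\Ks t_1 t_2$ or $\Is t$ and then simplifies the lemmas and proofs of Section~\ref{sec_equiv}. Your additional observation that no $\Bs$-clause is needed in the normality condition (because the $S'$ optimisations only destructure $\Ks$ and $\Is$) is exactly the simplification the paper's sketch alludes to.
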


\begin{proof}[Proof sketch]
  The proof is a simplification of the proof of
  Theorem~\ref{thm_t}. One defines a term to be \emph{$S$-normal} if
  it does not contain subterms of the form $\Ks t_1 t_2$ or $\Is
  t$. Then one proves lemmas analogous to the lemmas proved in
  Section~\ref{sec_equiv}, essentially by simplifying the proofs in
  Section~\ref{sec_equiv}. The theorem then follows from the lemmas in
  exactly the same way.
\end{proof}

\begin{theorem}\label{thm_t_star}
  If $t$ is in $\beta$-normal form then $\trans{T_*}(t) =
  \trans{T_*'}(t)$.
\end{theorem}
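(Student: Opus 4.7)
The plan is to mirror the structure of Theorem~\ref{thm_t} and its supporting lemmas, replacing $\Bs'$ by $\Bs^*$ throughout. First I would declare a term to be \emph{$T_*$-normal} if it contains no subterm of the form $\Ks t_1 t_2$, $\Is t$, $\Bs t_1 t_2 t_3$, or $\Bs^* t_1 t_2 t_3 t_4$, then re-prove the $T_*$-analogues of Lemmas~\ref{lem_t_0}--\ref{lem_t_4}, and finally conclude by the same induction on~$t$ as in Theorem~\ref{thm_t}.

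The analogue of Lemma~\ref{lem_t_0} goes through unchanged, appealing to optimisations~(1) and~(2) of~$T_*'$. For the analogue of Lemma~\ref{lem_t_1} I would inspect which equations of~$T_*$ can yield each combinator head on a $T_*$-normal input; in particular, only the newly added equation can produce a $\Bs^*$ head, forcing $t = t_1 t'$ with $x \notin \FV(t_1)$ and $\abs{T_*}{x}{t'} = \Bs t_2 t_3$ whenever $\abs{T_*}{x}{t} = \Bs^* t_1 t_2 t_3$ (equation~(3) would require~$t$ to contain a $\Bs^*$-subterm of arity four, contradicting $T_*$-normality). Lemmas~\ref{lem_t_3} and~\ref{lem_t_4} carry over with no change in argument.

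The analogue of Lemma~\ref{lem_t_2} is proved by case analysis on which equation of~$T_*$ applies to the $T_*$-normal input~$t$. The genuinely new case is the newly added equation: when $t = s t'$ with $x \notin \FV(s)$ and $\abs{T_*}{x}{t'} = \Bs t_1 t_2$, the analogue of Lemma~\ref{lem_t_0}(1) together with the inductive hypothesis gives $\abs{T_*'}{x}{t} = \Opt{\Ss (\Ks s) (\Bs t_1 t_2)}$, which reduces to $\Bs^* s t_1 t_2 = \abs{T_*}{x}{t}$ by optimisation~(3) of~$T_*'$. For equations~(7) and~(8), producing $\Cs'$ and $\Ss'$, I would split not on whether the leftmost factor of~$t$ is an application (as in Lemma~\ref{lem_t_2}) but on whether $\abs{T_*}{x}{s}$ has the form $\Bs t_1 t_2$: in the affirmative subcase, $\abs{T_*'}{x}{u s}$ collapses via optimisation~(3) to $\Bs^* u t_1 t_2$, after which optimisation~(6) or~(9) supplies the desired $\Cs'$- or $\Ss'$-headed term; in the negative subcase, optimisation~(4) yields $\Bs u (\abs{T_*}{x}{s})$, and optimisation~(5) or~(8) finishes. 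The remaining cases (equations~(3), (4), (5), (9), (10), (11)) transcribe the corresponding arguments from Lemma~\ref{lem_t_2}, using $T_*$-normality of the input together with the structural lemma to rule out the combinator shapes that would trigger a wrong optimisation.

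The main obstacle is precisely this shift in the discriminating condition for the $\Cs'$ and $\Ss'$ cases. In~$T'$, the $\Bs'$-producing rule $\Opt{\Ss (\Ks (u s)) t} = \Bs' u s t$ triggers on the shape of the argument inside the~$\Ks$, so it was natural in Lemma~\ref{lem_t_2} to split on whether the leftmost factor is an application. In~$T_*'$, the corresponding rule $\Opt{\Ss (\Ks u) (\Bs s t)} = \Bs^* u s t$ triggers on the shape of the \emph{second} argument to~$\Ss$, so one must instead split on the form of $\abs{T_*}{x}{s}$. Verifying that all four combinations of this inner split with the outer split still reassemble to the intended $\Cs' u (\abs{T_*}{x}{s}) t'$ or $\Ss' u (\abs{T_*}{x}{s}) (\abs{T_*}{x}{t'})$ is mechanical but demands careful bookkeeping.
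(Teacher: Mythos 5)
Your proposal is correct and follows essentially the same route as the paper's proof: the same notion of $T_*$-normality, the same treatment of the new $\Bs^*$-producing equation, and the same key adjustment of splitting the $\Cs'$- and $\Ss'$-cases on whether the abstracted middle subterm has the form $\Bs t_1 t_2$ rather than on whether the leftmost factor is an application. The one step the paper spells out that you fold into ``the remaining cases'' is rechecking the plain $\Bs$-case (where one uses that the new $\Bs^*$-equation takes precedence in $T_*$, so $\abs{T_*}{x}{t'}$ is not of the form $\Bs t_1 t_2$ and optimisation~(3) of $T_*'$ cannot fire), but your general remark about ruling out the triggering shapes covers this.
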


\begin{proof}[Proof sketch]
  The proof is analogous to the proof of Theorem~\ref{thm_t} in
  Section~\ref{sec_equiv}. We define a term to be $T_*$-normal if it
  does not contain subterms of the form $\Ks t_1 t_2$, $\Is t$, $\Bs
  t_1 t_2 t_3$ or $\Bs^* t_1 t_2 t_3 t_4$, and we modify all lemmas
  appropriately. Only Lemma~\ref{lem_t_2} requires significant
  adjustments. First, we show the new case when $t = u s$ and
  $\abs{T_*}{x}{t} = \Bs^* u s_1 s_2$. Then $x \notin \FV(u)$ and
  $\abs{T_*}{x}{s} = \Bs s_1 s_2$. We have $\abs{T_*'}{x}{t} =
  \Opt{\Ss (\abs{T_*'}{x}{u}) (\abs{T_*'}{x}{s})}$. By the inductive
  hypothesis $\abs{T_*'}{x}{s} = \abs{T_*}{x}{s}$. Also
  $\abs{T_*'}{x}{u} = \Ks u$ by (an appropriate restatement of)
  Lemma~\ref{lem_t_0}. Thus $\abs{T_*'}{x}{t} = \Opt{\Ss (\Ks u) (\Bs
    s_1 s_2)} = \Bs^* u s_1 s_2 = \abs{T_*}{x}{t}$.

  We also need to reconsider the case when $\abs{T_*}{x}{t} = \Bs u
  s'$. Then $t = u s$, $x \notin \FV(u)$, $x \in \FV(s)$, $s \ne x$
  and~$s' = \abs{T_*}{x}{s}$ does not have the form $\Bs s_1 s_2$. We
  have $\abs{T_*'}{x}{t} = \Opt{\Ss (\abs{T_*'}{x}{u})
    (\abs{T_*'}{x}{s})}$. By Lemma~\ref{lem_t_0} and $x \notin \FV(u)$
  we have $\abs{T_*'}{x}{u} = \Ks u$. By the inductive hypothesis
  $\abs{T_*'}{x}{s} = \abs{T_*}{x}{s} = s'$. Hence $\abs{T_*'}{x}{t} =
  \Opt{\Ss (\Ks u) s'}$. Since~$s'$ does not have the form $\Bs s_1
  s_2$, the third optimisation of~$T_*'$ (the one for~$\Bs^*$) does
  not apply to $\Ss (\Ks u) s'$. Because $x \in \FV(s)$ and $s \ne x$,
  by (an appropriate restatement of) Lemma~\ref{lem_t_1} the first two
  optimisations do not apply either. Thus $\abs{T_*'}{x}{t} = \Bs u s'
  = \abs{T_*}{x}{t}$.

  Consider the case when $\abs{T_*}{x}{t} = \Cs' t_1 t_2' t_3$. Then
  $t = t_1 t_2 t_3$, $t_2' = \abs{T_*}{x}{t_2}$, $x \notin
  \FV(t_1t_3)$, $x \in \FV(t_2)$, $t_2 \ne x$. We have
  $\abs{T_*'}{x}{t_2} = \abs{T_*}{x}{t_2} = t_2'$ by the inductive
  hypothesis, and $\abs{T_*'}{x}{t_1} = \Ks t_1$, $\abs{T_*'}{x}{t_3}
  = \Ks t_3$ by Lemma~\ref{lem_t_0}. Thus~$t_2'$ does not have the
  form~$\Ks t_2'$ or~$\Is$. First assume~$t_2'$ does not have the form
  $\Bs u_1 u_2$. Then $\abs{T_*'}{x}{t_1 t_2} = \Opt{\Ss (\Ks t_1)
    t_2'} = \Bs t_1 t_2'$ and thus $\abs{T_*'}{x}{t} = \Opt{\Ss (\Bs
    t_1 t_2') (\Ks t_3)} = \Cs' t_1 t_2' t_3 = \abs{T_*}{x}{t}$. If
  $t_2' = \Bs u_1 u_2$ then $\abs{T_*'}{x}{t_1 t_2} = \Opt{\Ss (\Ks
    t_1) (\Bs u_1 u_2)} = \Bs^* t_1 u_1 u_2$. Thus $\abs{T_*'}{x}{t} =
  \Opt{\Ss (\Bs^* t_1 u_1 u_2) (\Ks t_3)} = \Cs' t_1 (\Bs u_1 u_2) t_3
  = \abs{T_*}{x}{t}$.

  The case when $\abs{T_*}{x}{t} = \Ss' t_1 t_2' t_3'$ needs
  adjustments similar to the case above. The remaning cases go through
  as before.
\end{proof}

Let~$S_{-\eta}$ be~$S$ without the equation~$(3)$, and
let~$S_{-\eta}'$ be~$S'$ without the optimisation~$(2)$. By modifying
the proof of Theorem~\ref{thm_t_eta} from Section~\ref{sec_equiv_eta}
we can show the following.

\begin{theorem}\label{thm_s_eta}
  For every lambda-term~$t$ and every variable~$x$ we have
  $\abs{S_{-\eta}}{x}{t} = \abs{S_{-\eta}'}{x}{t}$.
\end{theorem}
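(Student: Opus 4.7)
The plan is to combine the strategies of Theorems~\ref{thm_s} and~\ref{thm_t_eta}: induct on~$t$, perform a case split on which defining equation of~$S_{-\eta}$ produces $\abs{S_{-\eta}}{x}{t}$, and in each case verify that exactly the intended optimisation of~$S_{-\eta}'$ fires. Just as in the proof of Theorem~\ref{thm_t_eta}, dropping the $\eta$-rule lets us dispense with any normal-form hypothesis on~$t$, so the result will hold for arbitrary lambda-terms.

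Before the induction I would record two preparatory facts. First, if $x \notin \FV(t)$ then $\abs{S_{-\eta}'}{x}{t} = \Ks t$, shown by the same induction as in Lemma~\ref{lem_t_0}(1), using optimisation~(1) of~$S_{-\eta}'$. Second, an $S$-version of Lemma~\ref{lem_t_1}: if $\abs{S_{-\eta}}{x}{t} = \Ks s$ then $x \notin \FV(t)$ and $s = t$, and if $\abs{S_{-\eta}}{x}{t} = \Is$ then $t = x$. This second fact holds without any normality assumption because the three remaining application equations of~$S_{-\eta}$ always yield a term headed by~$\Bs$, $\Cs$ or~$\Ss$; this is precisely where the removal of the $\eta$-rule simplifies matters compared to Lemma~\ref{lem_t_1}.

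The inductive step for $t = s_1 s_2$ with $x \in \FV(s_1 s_2)$ splits according to which of equations~(4), (5), (6) of~$S_{-\eta}$ fires. In each case, by the inductive hypothesis and the first preparatory fact, $\abs{S_{-\eta}'}{x}{t}$ equals $\Opt{\Ss a_1 a_2}$ with $a_i = \abs{S_{-\eta}}{x}{s_i}$, where $a_i = \Ks s_i$ whenever $x \notin \FV(s_i)$. The second preparatory fact guarantees that whenever $x \in \FV(s_i)$ the argument $a_i$ is not of the form $\Ks\,\cdot$, so exactly the intended optimisation among~(3), (4), (5) of~$S_{-\eta}'$ applies and yields the same output as~$S_{-\eta}$.

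The only mildly subtle configuration is $s_2 = x$ in the case of equation~(4), which makes $a_2 = \Is$: this is exactly what optimisation~(2) of~$S'$ would have collapsed. Since that optimisation is absent from~$S_{-\eta}'$, the term $\Ss(\Ks s_1)\Is$ is merely caught by optimisation~(3) and becomes $\Bs s_1 \Is$, matching the output $\Bs s_1\,\abs{S_{-\eta}}{x}{x} = \Bs s_1 \Is$ of equation~(4) of~$S_{-\eta}$. Hence no mismatch arises, and the argument goes through exactly as in the proof of Theorem~\ref{thm_t_eta}, with substantially fewer subcases because there are no $\Bs',\Cs',\Ss'$ combinators to track.
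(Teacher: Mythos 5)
Your proof is correct and follows essentially the same route as the paper, which sketches Theorem~\ref{thm_s_eta} as a simplification of Theorem~\ref{thm_t_eta}: induction on~$t$ with a case split on the defining equation, supported by the $S$-analogues of Lemma~\ref{lem_t_0}(1) and of Lemma~\ref{lem_t_1} (the latter now holding without any normality hypothesis precisely because the $\eta$-equation is gone). Your explicit check of the $s_2 = x$ configuration under equation~(4) correctly identifies the one point where the absence of optimisation~(2) matters.
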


\begin{proof}[Proof sketch]
  The proof is a straightforward simplification of the proof of
  Theorem~\ref{thm_t_eta}.
\end{proof}

\begin{corollary}
  For every lambda-term~$t$ we have $\trans{S_{-\eta}}(t) =
  \trans{S_{-\eta}'}(t)$.
\end{corollary}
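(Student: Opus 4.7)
The plan is a routine structural induction on the lambda-term~$t$, using Theorem~\ref{thm_s_eta} as a black box to promote the pointwise equivalence of the bracket abstraction functions $\abs{S_{-\eta}}{x}{\cdot}$ and $\abs{S_{-\eta}'}{x}{\cdot}$ on combinatory terms to an equality of the induced translations on arbitrary lambda-terms. Since the two algorithms differ only in the abstraction step and agree on the variable and application clauses of~$\trans{\cdot}$, the only clause where we need to invoke the theorem is the abstraction clause.

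Concretely, I proceed by induction on~$t$. If $t = y$ is a variable, then both $\trans{S_{-\eta}}(y)$ and $\trans{S_{-\eta}'}(y)$ unfold to~$y$ by definition. If $t = s u$, then both translations unfold to $\trans{A}(s)\,\trans{A}(u)$ for the respective~$A$, and the equality follows from two applications of the inductive hypothesis. In the abstraction case $t = \lambda y . s$, I unfold $\trans{S_{-\eta}}(\lambda y . s) = \abs{S_{-\eta}}{y}{\trans{S_{-\eta}}(s)}$ and likewise for $S_{-\eta}'$, apply the inductive hypothesis to obtain a common inner combinatory term $u := \trans{S_{-\eta}}(s) = \trans{S_{-\eta}'}(s)$, and then invoke Theorem~\ref{thm_s_eta} at the variable~$y$ and combinatory term~$u$ to conclude $\abs{S_{-\eta}}{y}{u} = \abs{S_{-\eta}'}{y}{u}$.

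There is no genuine obstacle here; the lifting is entirely mechanical once the theorem is in hand. The same reasoning template also yields the analogous (unstated) corollary derivable from Theorem~\ref{thm_t_eta}, namely $\trans{T_{-\eta}}(t) = \trans{T_{-\eta}'}(t)$ for every lambda-term~$t$.
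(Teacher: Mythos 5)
Your proof is correct and is exactly the routine induction on~$t$ that the paper leaves implicit: the corollary follows from Theorem~\ref{thm_s_eta} by noting that the induced translations agree on the variable and application clauses and differ only in the abstraction clause, where the theorem applies. Nothing further is needed.
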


\bibliography{biblio}{}

\begin{thebibliography}{10}

\bibitem{Barendregt1984}
Henk~P. Barendregt.
\newblock {\em The Lambda Calculus: Its Syntax and Semantics}.
\newblock North Holland, 2nd edition, 1984.

\bibitem{Bunder1990}
Martin~W. Bunder.
\newblock Some improvements to {T}urner's algorithm for bracket abstraction.
\newblock {\em Journal of Symbolic Logic}, 55(2):656--669, 1990.

\bibitem{Burton1982}
F.~Warren Burton.
\newblock A linear space translation of functional programs to {T}urner
  combinators.
\newblock {\em Information Processing Letters}, 14(5):201--204, 1982.

\bibitem{CurryFeys1958}
Haskell~B. Curry, Robert Feys, and William Craig.
\newblock {\em Combinatory Logic}, volume~1.
\newblock North-Holland, 1958.

\bibitem{Hikita1984}
Teruo Hikita.
\newblock On the average size of {T}urner's translation to combinator programs.
\newblock {\em Journal of Information Processing}, 7(3):164--169, 1984.

\bibitem{Hurd2002}
Joe Hurd.
\newblock An {LCF}-style interface between {HOL} and first-order logic.
\newblock In {\em Automated Deduction - CADE-18, 18th International Conference
  on Automated Deduction, Copenhagen, Denmark, July 27-30, 2002, Proceedings},
  pages 134--138, 2002.

\bibitem{Jones1987}
Simon L.~Peyton Jones.
\newblock {\em The Implementation of Functional Programming Languages}.
\newblock Prentice-Hall, 1987.

\bibitem{JoyRaywardBurton1985}
M.~S. Joy, Victor~J. Rayward{-}Smith, and F.~Warren Burton.
\newblock Efficient combinator code.
\newblock {\em Computer Languages}, 10(3/4):211--224, 1985.

\bibitem{Joy1984}
Michael~S. Joy.
\newblock {\em On the efficient implementation of combinators as an object code
  for functional programs}.
\newblock PhD thesis, University of East Anglia, 1984.

\bibitem{Kennaway1982}
Richard Kennaway.
\newblock The complexity of a translation of $\lambda$-calculus to combinators.
\newblock Technical report, University of East Anglia, 1982.

\bibitem{KennawaySleep1987}
Richard Kennaway and M.~Ronan Sleep.
\newblock Variable abstraction in {$O(n \log n)$} space.
\newblock {\em Information Processing Letters}, 24(5):343--349, 1987.

\bibitem{MengPaulson2008}
Jia Meng and Lawrence~C. Paulson.
\newblock Translating higher-order clauses to first-order clauses.
\newblock {\em Journal of Automated Reasoning}, 40(1):35--60, 2008.

\bibitem{Schonfinkel1924}
Moses Sch{\"o}nfinkel.
\newblock {\"U}ber die {B}austeine der mathematischen {L}ogik.
\newblock {\em Mathematische Annalen}, 92:305--316, 1924.

\bibitem{Turner1979a}
David~A. Turner.
\newblock Another algorithm for bracket abstraction.
\newblock {\em Journal of Symbolic Logic}, 44(2):267--270, 1979.

\bibitem{Turner1979}
David~A. Turner.
\newblock A new implementation technique for applicative languages.
\newblock {\em Software -- Practice and Experience}, 9(1):31--49, 1979.

\end{thebibliography}
\bibliographystyle{plain}

\end{document}